 \font \eightrm=cmr8
 \newcommand{\nc}{\newcommand}
\newtheorem{thm}{Theorem}
\newtheorem{exam}{Example}
\newtheorem{lem}[thm]{Lemma}
\newtheorem{prop}[thm]{Proposition}
\newtheorem{defn}{Definition}
\newtheorem{rmk}[thm]{Remark}
\def\diagramme #1{\vskip 4mm \centerline {#1} \vskip 4mm}
\nc{\BA}{{\Bbb A}} \nc{\CC}{{\Bbb C}} \nc{\DD}{{\Bbb D}}
\nc{\EE}{{\Bbb E}} \nc{\FF}{{\Bbb F}} \nc{\GG}{{\Bbb G}}
\nc{\HH}{{\Bbb H}} \nc{\LL}{{\Bbb L}} \nc{\NN}{{\Bbb N}}
\nc{\PP}{{\Bbb P}} \nc{\QQ}{{\Bbb Q}} \nc{\RR}{{\Bbb R}}
\nc{\TT}{{\Bbb T}} \nc{\VV}{{\Bbb V}} \nc{\ZZ}{{\Bbb Z}}
\nc{\Cal}[1]{{\mathcal {#1}}}
\nc{\mop}[1]{\mathop{\hbox {\rm #1} }}
\nc{\smop}[1]{\mathop{\hbox {\eightrm #1} }}
\nc{\mopl}[1]{\mathop{\hbox {\rm #1} }\limits}
\nc{\frakg}{{\frak g}}
\nc{\g}[1]{{\frak {#1}}}
\def \restr#1{\mathstrut_{\textstyle |}\raise-8pt\hbox{$\scriptstyle #1$}}
\def \srestr#1{\mathstrut_{\scriptstyle |}\hbox to
  -1.5pt{}\raise-4pt\hbox{$\scriptscriptstyle #1$}}
\nc{\wt}{\widetilde}
\nc{\wh}{\widehat}
\nc{\un}{\hbox{\bf 1}}
\nc{\redtext}[1]{\textcolor{red}{\tt #1}}
\nc{\bluetext}[1]{\textcolor{blue}{#1}}
\nc{\comment}[1]{[[{\tt {#1}}]] }
\nc{\R}{{\mathbb R}}
\nc\fleche[1]{\mathop{\hbox to #1 mm{\rightarrowfill}}\limits}
\def\semi{\mathrel{\raise 1.2pt\hbox{${\scriptscriptstyle |}$}\joinrel\kern
-3.8pt\mathrel{\times}}}
\def\racine{\,{\scalebox{0.07}{
\begin{picture}(29,29) (360,-285)
    \SetWidth{6}
    \SetColor{Black}
    \Vertex(375,-271){20}
  \end{picture}
  }}\,}
  \def\racineun{\,{\scalebox{0.5}{
  \begin{picture}(43,27) (342,-266)
    \SetWidth{1.0}
    \SetColor{Black}
    \GOval(352,-256)(9,9)(0){0.882}
    \Text(350,-260)[lb]{\Large{\Black{$1$}}}
  \end{picture}
  }}\,}
  \def\racinedeux{\,{\scalebox{0.5}{
  \begin{picture}(43,27) (342,-266)
    \SetWidth{1.0}
    \SetColor{Black}
    \GOval(352,-256)(9,9)(0){0.882}
    \Text(350,-260)[lb]{\Large{\Black{$2$}}}
  \end{picture}
  }}\,}
  \def\racinetrois{\,{\scalebox{0.5}{
  \begin{picture}(43,27) (342,-266)
    \SetWidth{1.0}
    \SetColor{Black}
    \GOval(352,-256)(9,9)(0){0.882}
    \Text(350,-260)[lb]{\Large{\Black{$3$}}}
  \end{picture}
  }}\,}
  \def\racinen{\,{\scalebox{0.5}{
  \begin{picture}(43,27) (342,-266)
    \SetWidth{1.0}
    \SetColor{Black}
    \GOval(352,-256)(9,9)(0){0.882}
    \Text(350,-260)[lb]{\Large{\Black{$n$}}}
  \end{picture}
  }}\,}
  \def\arbaf{\,{\scalebox{0.5}{
 \begin{picture}(106,125) (318,-165)
    \SetWidth{1.0}
    \SetColor{Black}
    \GOval(354,-155)(9,9)(0){0.882}
    \GOval(353,-117)(9,9)(0){0.882}
    \Line(353,-126)(353,-145)
    \GOval(351,-85)(9,9)(0){0.882}
    \Line(352,-93)(352,-109)
    \GOval(375,-62)(9,9)(0){0.882}
    \GOval(328,-61)(9,9)(0){0.882}
    \Line(333,-69)(344,-78)
    \Line(368,-68)(355,-80)
    \Text(369,-161)[lb]{\Large{\Black{$(a,1)$}}}
    \Text(368,-122)[lb]{\Large{\Black{$(e,1)$}}}
    \Text(365,-89)[lb]{\Large{\Black{$(h,2)$}}}
    \Text(389,-63)[lb]{\Large{\Black{$(d,1)$}}}
    \Text(336,-61)[lb]{\Large{\Black{$(c,2)$}}}
  \end{picture}
  }}\,}
 \def\arbae{\,{\scalebox{0.5}{
 \begin{picture}(89,130) (340,-161)
    \SetWidth{1.0}
    \SetColor{Black}
    \GOval(352,-151)(9,9)(0){0.882}
    \GOval(351,-114)(9,9)(0){0.882}
    \Line(352,-123)(352,-142)
    \GOval(350,-79)(9,9)(0){0.882}
    \GOval(350,-46)(9,9)(0){0.882}
    \GOval(381,-100)(9,9)(0){0.882}
    \Line(351,-88)(352,-103)
    \Line(373,-105)(359,-110)
    \Line(351,-54)(351,-70)
    \Text(365,-158)[lb]{\Large{\Black{$(a,1)$}}}
    \Text(362,-128)[lb]{\Large{\Black{$(e,1)$}}}
    \Text(394,-105)[lb]{\Large{\Black{$(d,1)$}}}
    \Text(361,-85)[lb]{\Large{\Black{$(h,2)$}}}
    \Text(363,-52)[lb]{\Large{\Black{$(c,2)$}}}
  \end{picture}
  }}\,}
 \def\arbad{\,{\scalebox{0.5}{
  \begin{picture}(89,130) (340,-161)
    \SetWidth{1.0}
    \SetColor{Black}
    \GOval(352,-151)(9,9)(0){0.882}
    \GOval(351,-114)(9,9)(0){0.882}
    \Line(352,-123)(352,-142)
    \GOval(350,-79)(9,9)(0){0.882}
    \GOval(350,-46)(9,9)(0){0.882}
    \GOval(381,-100)(9,9)(0){0.882}
    \Line(351,-88)(352,-103)
    \Line(373,-105)(359,-110)
    \Line(351,-54)(351,-70)
    \Text(365,-158)[lb]{\Large{\Black{$(a,1)$}}}
    \Text(362,-128)[lb]{\Large{\Black{$(e,1)$}}}
    \Text(394,-105)[lb]{\Large{\Black{$(c,2)$}}}
    \Text(361,-85)[lb]{\Large{\Black{$(h,2)$}}}
    \Text(363,-52)[lb]{\Large{\Black{$(d,1)$}}}
  \end{picture}
    }}\,}
 \def\arbac{\,{\scalebox{0.5}{
 \begin{picture}(142,104) (294,-187)
    \SetWidth{1.0}
    \SetColor{Black}
    \GOval(352,-177)(9,9)(0){0.882}
    \GOval(351,-140)(9,9)(0){0.882}
    \Line(352,-149)(352,-168)
    \GOval(349,-98)(9,9)(0){0.882}
    \GOval(304,-121)(9,9)(0){0.882}
    \GOval(389,-116)(9,9)(0){0.882}
    \Line(313,-124)(342,-136)
    \Line(349,-106)(350,-132)
    \Line(382,-122)(357,-135)
    \Text(367,-182)[lb]{\Large{\Black{$(a,1)$}}}
    \Text(364,-147)[lb]{\Large{\Black{$(e,1)$}}}
    \Text(401,-124)[lb]{\Large{\Black{$(d,1)$}}}
    \Text(360,-104)[lb]{\Large{\Black{$(h,2)$}}}
    \Text(312,-122)[lb]{\Large{\Black{$(c,2)$}}}
  \end{picture}
  }}\,}
 \def\arbab{\,{\scalebox{0.5}{
 \begin{picture}(59,57) (341,-234)
    \SetWidth{1.0}
    \SetColor{Black}
    \GOval(352,-224)(9,9)(0){0.882}
    \GOval(351,-187)(9,9)(0){0.882}
    \Line(352,-196)(352,-215)
    \Text(365,-231)[lb]{\Large{\Black{$(e,1)$}}}
    \Text(362,-200)[lb]{\Large{\Black{$(h,2)$}}}
  \end{picture}
 }}\,}
 \def\arbaa{\,{\scalebox{0.5}{
 \begin{picture}(106,88) (314,-203)
    \SetWidth{1.0}
    \SetColor{Black}
    \GOval(352,-193)(9,9)(0){0.882}
    \GOval(351,-156)(9,9)(0){0.882}
    \Line(352,-165)(352,-184)
    \GOval(324,-134)(9,9)(0){0.882}
    \GOval(375,-132)(9,9)(0){0.882}
    \Line(357,-149)(369,-138)
    \Line(333,-141)(345,-150)
    \Text(365,-199)[lb]{\Large{\Black{$(a,1)$}}}
    \Text(363,-164)[lb]{\Large{\Black{$(b,3)$}}}
    \Text(385,-141)[lb]{\Large{\Black{$(d,1)$}}}
    \Text(334,-136)[lb]{\Large{\Black{$(c,2)$}}}
  \end{picture}
    }}\,}
\def\echelunun{\,{\scalebox{0.5}{
\begin{picture}(45,63) (341,-230)
    \SetWidth{1.0}
    \SetColor{Black}
    \GOval(352,-220)(9,9)(0){0.882}
    \GOval(351,-183)(9,9)(0){0.882}
    \Line(352,-191)(352,-210)
    \Text(350,-224)[lb]{\Large{\Black{$1$}}}
    \Text(349,-187)[lb]{\Large{\Black{$1$}}}
  \end{picture}
}}\,}
\def\echelundeux{\,{\scalebox{0.5}{
\begin{picture}(45,63) (341,-230)
    \SetWidth{1.0}
    \SetColor{Black}
    \GOval(352,-220)(9,9)(0){0.882}
    \GOval(351,-183)(9,9)(0){0.882}
    \Line(352,-191)(352,-210)
    \Text(350,-224)[lb]{\Large{\Black{$1$}}}
    \Text(349,-187)[lb]{\Large{\Black{$2$}}}
  \end{picture}
}}\,}
\def\echelvw{\,{\scalebox{0.5}{
\begin{picture}(45,63) (341,-230)
    \SetWidth{1.0}
    \SetColor{Black}
    \GOval(352,-220)(9,9)(0){0.882}
    \GOval(351,-183)(9,9)(0){0.882}
    \Line(352,-191)(352,-210)
    \Text(350,-224)[lb]{\Large{\Black{$v$}}}
    \Text(349,-187)[lb]{\Large{\Black{$w$}}}
  \end{picture}
}}\,}
\def\echeldeuxun{\,{\scalebox{0.5}{
\begin{picture}(45,63) (341,-230)
    \SetWidth{1.0}
    \SetColor{Black}
    \GOval(352,-220)(9,9)(0){0.882}
    \GOval(351,-183)(9,9)(0){0.882}
    \Line(352,-191)(352,-210)
    \Text(350,-224)[lb]{\Large{\Black{$2$}}}
    \Text(349,-187)[lb]{\Large{\Black{$1$}}}
  \end{picture}
}}\,}
\def\couronne{\,{\scalebox{0.5}{
\begin{picture}(91,58) (317,-235)
    \SetWidth{1.0}
    \SetColor{Black}
    \GOval(352,-225)(9,9)(0){0.882}
    \Text(350,-229)[lb]{\Large{\Black{$1$}}}
    \GOval(327,-193)(9,9)(0){0.882}
    \GOval(374,-194)(9,9)(0){0.882}
    \Line(367,-201)(357,-217)
    \Line(333,-198)(346,-217)
    \Text(326,-199)[lb]{\Large{\Black{$1$}}}
    \Text(373,-198)[lb]{\Large{\Black{$1$}}}
  \end{picture}
}}\,}
\def\echelununun{\,{\scalebox{0.5}{
\begin{picture}(47,99) (339,-194)
    \SetWidth{1.0}
    \SetColor{Black}
    \GOval(352,-184)(9,9)(0){0.882}
    \GOval(351,-147)(9,9)(0){0.882}
    \Line(352,-155)(352,-174)
    \Text(350,-188)[lb]{\Large{\Black{$1$}}}
    \Text(349,-151)[lb]{\Large{\Black{$1$}}}
    \GOval(349,-112)(9,9)(0){0.882}
    \Line(350,-121)(351,-137)
    \Text(347,-116)[lb]{\Large{\Black{$1$}}}
  \end{picture}
}}\,}
\def\arbacc{\,{\scalebox{0.5}{
 \begin{picture}(142,104) (294,-187)
    \SetWidth{1.0}
    \SetColor{Black}
    \GOval(352,-177)(9,9)(0){0.882}
    \GOval(351,-140)(9,9)(0){0.882}
    \Line(352,-149)(352,-168)
    \GOval(349,-98)(9,9)(0){0.882}
    \GOval(304,-121)(9,9)(0){0.882}
    \GOval(389,-116)(9,9)(0){0.882}
    \Line(313,-124)(342,-136)
    \Line(349,-106)(350,-132)
    \Line(382,-122)(357,-135)
    \Text(367,-182)[lb]{\Large{\Black{$(a,1)$}}}
    \Text(364,-147)[lb]{\Large{\Black{$(e,1)$}}}
    \Text(401,-124)[lb]{\Large{\Black{$(h,2)$}}}
    \Text(360,-104)[lb]{\Large{\Black{$(d,1)$}}}
    \Text(312,-122)[lb]{\Large{\Black{$(c,2)$}}}
  \end{picture}
  }}\,}
\def\arbaccc{\,{\scalebox{0.5}{
 \begin{picture}(142,104) (294,-187)
    \SetWidth{1.0}
    \SetColor{Black}
    \GOval(352,-177)(9,9)(0){0.882}
    \GOval(351,-140)(9,9)(0){0.882}
    \Line(352,-149)(352,-168)
    \GOval(349,-98)(9,9)(0){0.882}
    \GOval(304,-121)(9,9)(0){0.882}
    \GOval(389,-116)(9,9)(0){0.882}
    \Line(313,-124)(342,-136)
    \Line(349,-106)(350,-132)
    \Line(382,-122)(357,-135)
    \Text(367,-182)[lb]{\Large{\Black{$(a,1)$}}}
    \Text(364,-147)[lb]{\Large{\Black{$(e,1)$}}}
    \Text(401,-124)[lb]{\Large{\Black{$(d,1)$}}}
    \Text(360,-104)[lb]{\Large{\Black{$(c,2)$}}}
    \Text(312,-122)[lb]{\Large{\Black{$(h,2)$}}}
  \end{picture}
  }}\,}
\def\arbaff{\,{\scalebox{0.5}{
 \begin{picture}(89,130) (340,-161)
    \SetWidth{1.0}
    \SetColor{Black}
    \GOval(352,-151)(9,9)(0){0.882}
    \GOval(351,-114)(9,9)(0){0.882}
    \Line(352,-123)(352,-142)
    \GOval(350,-79)(9,9)(0){0.882}
    \GOval(350,-46)(9,9)(0){0.882}
    \GOval(304,-100)(9,9)(0){0.882}
    \Line(351,-88)(352,-103)
    \Line(312,-99)(344,-110)
    \Line(351,-54)(351,-70)
    \Text(365,-158)[lb]{\Large{\Black{$(a,1)$}}}
    \Text(362,-128)[lb]{\Large{\Black{$(e,1)$}}}
    \Text(361,-85)[lb]{\Large{\Black{$(h,2)$}}}
    \Text(363,-52)[lb]{\Large{\Black{$(d,1)$}}}
    \Text(280,-130)[lb]{\Large{\Black{$(c,2)$}}}
\end{picture}}}\,}
  \def\angleex{\,{\scalebox{0.5}{
   \begin{picture}(398,198) (224,-239)
    \SetWidth{1.0}
    \SetColor{Black}
    \GOval(384,-218)(16,16)(0){0.882}
    \GOval(384,-122)(16,16)(0){0.882}
    \GOval(384,-122)(16,16)(0){0.882}
    \Line(384,-138)(384,-202)
    \Line(448,-90)(400,-122)
    \GOval(384,-218)(16,16)(0){0.882}
    \GOval(448,-90)(16,16)(0){0.882}
    \Line(368,-122)(320,-90)
    \GOval(320,-90)(16,16)(0){0.882}
    \Line[dash,dashsize=10](384,-106)(384,-42)
    \Line[dash,dashsize=10](464,-74)(496,-42)
    \Line[dash,dashsize=10](400,-218)(496,-202)
    \Line[dash,dashsize=10](368,-122)(256,-106)
    \Line[dash,dashsize=10](304,-90)(256,-58)
    \Line[dash,dashsize=10](400,-122)(528,-106)
    \Line[dash,dashsize=10](368,-218)(256,-202)
  \end{picture}}}\,}
\begin{document}
\title{ Weighted rooted trees and deformations of operads}

\author{ Abdellatif Sa\" idi}
\address{ Faculty of sciences of Monastir 5019, Tunisia }
\address{Laboratoire Physique math\'ematiques, fonctions sp\'etiales et applications, Sousse 4011, Tunisia}

\email{Abdellatif.Saidi@fsm.rnu.tn}

%%%%%%%%%%%%%%%%%%%%%%%%%%%%%%%%%%%%%%%%%%%%%%%%%%%%%%%%%%%%%%%%%%%
\date{May 2014}
%%%%%%%%%%%%%%%%%%%%%%%%%%%%%%%%%%%%%%%%%%%%%%%%%%%%%%%%%%%%%%%%%%%

\begin{abstract}
We will define an operad $\mathcal{B}^0$ on planar rooted trees. $\mathcal{B}^{0}$ is
analgous to the $NAP$-operad in the non-planar tree setting.
 We will define a family of "current-preserving" operads $\mathcal{B}^{\lambda}$
 depending on a scalar parameter $\lambda$,  which can be seen as a deformation of
the operad $\mathcal{B}^0$.
 Forgetting the extra "current-preserving" notion above gives back the Brace operad
for $\lambda=1$ and the $\mathcal{B}^0$ operad
 for $\lambda=0$. A natural map from non-planar rooted trees to planar ones gives back the current-preserving interpolation between $NAP$ and pre-Lie investigated in a previous article \cite{S}.
\end{abstract}
\maketitle
\noindent
{\bf{Keywords:}} Operads, pre-Lie operad, NAP operad, Brace operad, trees,
deformations.\\
{\bf{Mathematics Subject Classification:}} 05C05, 16W30, 18D50.

%%%%%%%%%%%%%%%%%%%%%%%%%%%%%%%%%%%%%%%%%%%%%%%%%%%%%%%%%%%%%%%%%%%

\tableofcontents

%%%%%%%%%%%%%%%%%%%%%%%%%%%%%%%%%%%%%%%%%%%%%%%%%%%%%%%%%%%%%%%%%%%
\section{Background on operads}

%%%%%%%%%%%%%%%%%%%%%%%%%%%%%%%

 In this section, we review the material needed for this article. We refer to
Ginzburg and
 Kapranov \cite{GLE} or J. L. Loday  \cite{L}.
 Let $K$ be a field of characteristic zero. An operad
(in the symmetric monoidal category of $k$-vector spaces) is given
by a collection of vector spaces $(\mathcal{O}(n)_{n\geq 0})$, a
right action of the symmetric group $S_n$ on $\mathcal{O}(n)$, and
a collection of compositions:

$$\begin{array}{clcll}
\circ_i&:&
\mathcal{O}(n)\otimes\mathcal{O}(m)&\longrightarrow&\mathcal{O}(n+m-1)~~~~~~~~
i=1,\dots,n;\\
&&(a,b)&\longmapsto &a\circ_i b ,\end{array}$$

with satisfy the following axioms:
\begin{itemize}
\item The two associativity conditions:
$$\begin{array}{cccc}
a\circ_i (b\circ_j c)&=&(a\circ_i b)\circ _{i+j-1}c,& \forall
i\in\{1,\dots,n\},~~\forall j\in\{1,\dots, m\}\\
(a\circ_i b)\circ_{m+j-1}c&=&(a\circ_j c)\circ_i b,&~~~\forall
i,j\in\{1,\dots,n\}, i<j,
 \end{array}$$
called nested associativity and disjoint associativity
respectively.
\item The unit axiom: there exists an object $e\in\mathcal{O}(1)$
for which for any $a\in \mathcal{O}(n)$:
$$\begin{array}{ccll}
e\circ a&=&a,\\
a\circ_i e&=& a,~~~\forall i\in\{1,\dots,n\}
\end{array}$$
\item The equivariance axiom:

for any $\sigma\in S_n,~~\tau \in S_m$, we have:
$$a.\sigma \circ_{\sigma(i)}b.\tau=(a\circ_i b).\rho(\sigma,\tau),$$
where $\rho(\sigma,\tau)\in S_{n+m-1}$ is defined by letting $\tau$ permute the set
$E_i = \{i, i +
1,\cdots, i+m-1\}$ of cardinality $m$, and then by letting $\sigma$ permute the set
$\{1,\cdots, i-1,Ei,i +m,\cdots, m + n-1\}$
 of cardinality $n$.
\end{itemize}
\begin{exam}
Any vector space $V$ yields an operad $End_V$ with $End_V (n)=
Hom(V^{\otimes n},V)$ where:
$$(f\circ_i
g)\big(a_1,\dots,a_{n+m-1}\big)=f\big(a_1,\dots,a_{i-1},g(a_i,\dots,a_{i+m-1}),a_{i+m},\dots,a_{n+m-1}\big)$$
\end{exam}
An algebra over an operad $\mathcal{O}$, or $\mathcal{O}$-algebra,
is a vector space $V$ together with an operad morphism from
$\mathcal{O}$ to $End_V$. This is equivalent to giving linear maps
$$\mathcal{O}(n)\otimes_{S_n} V^{\otimes n}\rightarrow V,$$
satisfying associativity conditions with  respect to the
compositions.\\
An important point in the theory of operad
is the following theorem :
\begin{thm}
\cite[Chapter 5, sect 5.7.1]{LV}. The free $\mathcal{O}$-algebra generated by $V$ is the space
$\mathcal{O}(V)=\bigoplus_{n\geq 0}\mathcal{O}(n)\otimes_{S_n}
V^{\otimes n}.$
\end{thm}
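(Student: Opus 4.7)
The plan is to prove the theorem by direct construction: exhibit an $\mathcal{O}$-algebra structure on $\mathcal{O}(V):=\bigoplus_{n\geq 0}\mathcal{O}(n)\otimes_{S_n}V^{\otimes n}$ and then verify the universal property with respect to the forgetful functor from $\mathcal{O}$-algebras to vector spaces. The construction should make all composition data of $\mathcal{O}$ tautological on $\mathcal{O}(V)$.

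First I would define the structure maps. For $\mu\in\mathcal{O}(k)$ and elements $\alpha_j=a_j\otimes(v_1^{(j)}\otimes\cdots\otimes v_{n_j}^{(j)})\in\mathcal{O}(n_j)\otimes V^{\otimes n_j}$, I set
$$\gamma_k\bigl(\mu\otimes\alpha_1\otimes\cdots\otimes\alpha_k\bigr)=\bigl(\mu\circ(a_1,\ldots,a_k)\bigr)\otimes\bigl(v_1^{(1)}\otimes\cdots\otimes v_{n_k}^{(k)}\bigr),$$
where $\mu\circ(a_1,\ldots,a_k):=(\cdots((\mu\circ_k a_k)\circ_{k-1}a_{k-1})\cdots)\circ_1 a_1\in\mathcal{O}(n_1+\cdots+n_k)$. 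I would then check that this descends to a well-defined map $\mathcal{O}(k)\otimes_{S_k}\mathcal{O}(V)^{\otimes k}\to\mathcal{O}(V)$: the disjoint associativity axiom guarantees that swapping two arguments $\alpha_i$ and $\alpha_{i+1}$ of the composition corresponds, on the $\mathcal{O}(n_1+\cdots+n_k)$-component, to acting by the block permutation $\rho$ obtained via the equivariance axiom, so that once one descends to coinvariants on both sides the diagonal $S_k$-action is absorbed. The associativity of the maps $\gamma_k$ themselves (that is, the axioms making $V\mapsto\gamma_\bullet$ into an algebra structure) then follows from nested and disjoint associativity of $\mathcal{O}$, applied iteratively, while the identity of the unit axiom yields the identity property of $\gamma_1$ on $e\otimes v$.

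Next I would verify the universal property. Let $j:V\hookrightarrow\mathcal{O}(V)$ be the inclusion $v\mapsto e\otimes v$ coming from the unit $e\in\mathcal{O}(1)$. Given any $\mathcal{O}$-algebra $(A,\gamma^A_\bullet)$ and linear $f:V\to A$, I would define $\widetilde{f}:\mathcal{O}(V)\to A$ by
$$\widetilde{f}\bigl(a\otimes(v_1\otimes\cdots\otimes v_n)\bigr)=\gamma^A_n\bigl(a\otimes f(v_1)\otimes\cdots\otimes f(v_n)\bigr),$$
which is well defined on $\mathcal{O}(n)\otimes_{S_n}V^{\otimes n}$ because the equivariance axiom holds in $\mathrm{End}_A$. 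The fact that $\widetilde{f}$ is a morphism of $\mathcal{O}$-algebras is then a direct unwinding: pushing $\mu\circ(a_1,\ldots,a_k)$ through $\gamma^A$ on the $A$-side coincides, by nested associativity in $\mathrm{End}_A$, with the iterated application of $\gamma^A$ that $\widetilde{f}$ produces from $\gamma_k$ on $\mathcal{O}(V)$. Uniqueness follows because $\mathcal{O}(V)$ is, by construction, spanned by expressions of the form $\gamma_n(a\otimes j(v_1)\otimes\cdots\otimes j(v_n))$, so an $\mathcal{O}$-algebra morphism $\mathcal{O}(V)\to A$ extending $f$ is forced on such generators.

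The main obstacle is the careful bookkeeping in the first paragraph: namely, verifying that the naive formula for $\gamma_k$ descends to the coinvariants $\otimes_{S_k}$ and $\otimes_{S_{n_j}}$ simultaneously. This requires translating permutations of the arguments $\alpha_1,\ldots,\alpha_k$ into the block permutation $\rho(\sigma,\tau)\in S_{n_1+\cdots+n_k}$ appearing in the operad's equivariance axiom, and recognising that the resulting action on $V^{\otimes(n_1+\cdots+n_k)}$ is exactly the one needed to bring the expression back into the same coinvariant class. Once this compatibility is established, all subsequent verifications reduce mechanically to the operad axioms.
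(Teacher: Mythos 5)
The paper does not actually prove this statement; it is quoted verbatim from Loday--Vallette \cite{LV}, so there is no internal argument to compare yours against. Your proof is the standard one found in that reference: equip $\mathcal{O}(V)$ with the tautological structure maps built from iterated partial compositions, check descent to the $S_k$- and $S_{n_j}$-coinvariants via the equivariance axiom and the block permutations $\rho(\sigma,\tau)$, and verify the universal property for the unit inclusion $v\mapsto e\otimes v$. The outline is correct and complete in its essential points.
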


In the remainder of this article, we describe operads
by species formalism, i.e: we replace the set $\{1,2,...,n\}$
by any finite set $A$ of cardinal $n$. For more details see sections $2$ and $3$ of
\cite{S}.

%%%%%%%%%%%%%%%%%%%%%%%%%%%%%%%%%%%%%%%%%%%%%%%%%%%%%%%%%%%%%%%%%%%%%%%%%%%%%
\section{A description of Pre Lie and Brace operads}
%%%%%%%%%%%%%%%%%%%%%%%%%%%%%%%%%%%%%%%%%%%%%%%%%%%%%%%%%%%%%%%%%%%%%%%%%%%

%%%%%%%%%%%%%%%%%%%%%%%%%%%%%%%%%%%%%%%%%%%%%%%%%%%
\subsection{Rooted trees and planar rooted trees}
%%%%%%%%%%%%%%%%%%%%%%%%%%%%%%%%%%%%%%%%%%%%%%%%%%

\begin{itemize}
\item A rooted tree $T$ is a finite graph, without loops, with a
special vertex called the root of $T$. The set of rooted trees
will be denoted by $\mathcal{T}$. Let $D$ be a nonempty set. A
rooted tree decorated by $D$ is a rooted tree with an application
from the set of its vertices into $D$. The set of rooted trees
decorated by $D$ will be denoted by $\mathcal{T}^{D}$. Following
the notation of Connes and Kreimer \cite{CK1}, any tree $T$ writes
$T=B_{+}(r,T_1\cdots T_k)$ where $r$ is the root (or the decoration
of the root) and $T_1,\dots, T_k$ are trees. So we have
$$B_{+}(r,T_1\cdots T_k)=B_{+}(r,T_{\sigma(1)}\cdots
T_{\sigma(k)})~~~\forall \sigma\in S_k.$$
The vector space spanned
by $\mathcal{T}^D$ will be denoted by $\mathcal{RT}^D$. We denote by $\mathcal{RT}$ the species of rooted trees: for any finite set $A$ the vector space $\mathcal{RT}(A)$ is spanned by the rooted trees with $\vert A\vert$ vertices, together with a bijection from the set of vertices onto $A$.
\item  A planar  rooted tree $T$ is a rooted tree with an
embedding into the plane. The set of planar rooted trees will be
denoted by $\mathcal{T}_P$. Let $D$ be a nonempty set. A planar
rooted tree decorated by $D$ is a planar tree with an application
from the set of its vertices into $D$. The set of planar rooted
trees decorated by $D$ will be denoted by $\mathcal{T}_{P}^{D}.$
If $T_1 \neq T_2$, then:
$$B_+ (r,T_1T_2\cdots T_k)\neq B_+ (r,T_2 T_1\cdots T_k).$$
We draw the planar tree in the disk:
$$\mathcal{D}_+ =\{(x,y)\in\mathbb{R}^2; y>0~~\text{and}~~ x^2 +y^2<1\}~,$$
but the root is drawn in $x=y=0$.

The vector space spanned by $\mathcal{T}_{P}^{D}$ will be denoted
by $\mathcal{PRT}^D$.We denote by $\mathcal{PRT}$ the species of planar rooted trees: for any finite set $A$ the vector space $\mathcal{PRT}(A)$ is spanned by the rooted trees with $\vert A\vert$ vertices, together with a bijection from the set of vertices onto $A$.  
\end{itemize}

%%%%%%%%%%%%%%%%%%%%%%%%%%%%%%%%%%
\subsection{The pre-Lie operad}
%%%%%%%%%%%%%%%%%%%%%%%%%%%%%%%%%%

We describe the pre-Lie operad in terms of non-planar labelled rooted trees, following
\cite{ChaLiv}. Let $A$ and $B$ be two finite sets. Let $v\in A$.
We define the partial composition
$\circ_v:\mathcal{RT}(A)\otimes\mathcal{RT}(B)\rightarrow\mathcal{RT}\big((A-\{v\})\amalg
B\big)$, as follows:
\begin{equation}
T\circ_v S=\sum_{f: E(T,v)\rightarrow B}{T\circ_{v}^{f}S},
\end{equation}
where $T\circ_{v}^{f}S$ is the tree of
$\mathcal{RT}((A-\{v\})\amalg B\big)$ obtained by replacing the
vertex $v$ of $T$ by the tree $S$ and connecting each edge $a$ in
$E(T,v)$ at the vertex $f(a)$ of $S$. If $v$ is not the root of
$T$, the edge going down from $v$ is now going down from the root
of $S$. The root of the new tree is the root of  $T$ if it is
different from vertex $v$, and of  $S$ else (see details in
 \cite{ChaLiv}). The unit is the tree with a single vertex.
  These partial compositions define an operad which is the pre-Lie operad.

%%%%%%%%%%%%%%%%%%%%%%%%%%%%%%%%%%%%%%%%%%%%%%%%%%%%%%%
\subsection{The Brace operad}
%%%%%%%%%%%%%%%%%%%%%%%%%%%%%%%%%%%%%%%%%%%%%%%%%%%%%%%%%%

We describe the brace operad by the planar labelled rooted trees
 (for more details see \cite{Chap1}). Let $T$ be a labelled planar rooted
tree. Let $s$ be a vertex of $T$. Let $B_{e}(s)$ be a little
disk of center $s$. The pair $(s,\alpha)$ is called an angle of
$T$ if $\alpha$ is a connected  component  of $B_{e}(s)\bigcap
(\mathcal{D}_+ \backslash T).$ We denote by $Ang(T)$ the set of
angles of $T$. Naturally, from left to right we set a total
order on $Ang(T)$ as follows: considering an angle as a direction from a vertex,
one can draw a path from every angle to a point of the upper part of the unit
circle. We order then these points clockwise.\\

\begin{exam}
$$\angleex$$
\begin{center}
$-~-~--$ Angles of of planar tree.
\end{center}
\end{exam}
Let $T, S$  be  labelled planar rooted trees. Let $v$ be a
vertex of $T$. We denote by $E(T,v)$ the totally ordered set
(from left to right)  of the incoming edges on $v$. We can
consider the set of increasing functions from $E(T,v)$ to $Ang
(S)$. We define
\begin{equation}
T\diamond_v S=\sum_{f:E(T,v)\to Ang(S)}{T\diamond_{v}^{f}S},
\end{equation}
where $f$ is an increasing function and $T\diamond_{v}^{f}S$ is the
planar tree obtained by substitution  of $S$ on vertex  $v$ of
$T$, plugging the incoming edges on $S$ according to map $f$.\\
These partial compositions defined above, define a structure of an
operad which is the Brace operad \cite{Chap1}.

%%%%%%%%%%%%%%%%%%%%%%%%%%%%%%%%%%%%%%%%%%%%%%%%%%%%%%%%%%%%%%%%%%%%%%%%%%%%%%%%%%%%%%%%%%%%%%%
\section {A description of the NAP-operad  and its counterpart in the planar rooted
trees setting }
%%%%%%%%%%%%%%%%%%%%%%%%%%%%%%%%%%%%%%%%%%%%%%%%%%%%%%%%%%%%%%%%%%%%%%%%%%%%%%%%%%%%%%%%%%%%%%%%%%%%%%%

%%%%%%%%%%%%%%%%%%%%%%%%%%%%%%%%%%%%%%%%%%%%%%%%%%%%%%%
\subsection{The NAP operad}
%%%%%%%%%%%%%%%%%%%%%%%%%%%%%%%%%%%%%%%%%%%%%%%%%%%%%%%%

we describe the NAP operad by the non-planar labelled rooted
trees \cite{Liv}. We define  the partial compositions $\circ_v
:\mathcal{RT}(A)\otimes\mathcal{RT}(B)\rightarrow\mathcal{RT}\big((A-\{v\})\amalg
B\big)$, as follows:
\begin{equation}
T\circ_v S={T\circ_{v}^{f_0}S},
\end{equation}
where $T\circ_{v}^{f_0}S$ is the labelled rooted tree of
$\mathcal{RT}\big((A-\{v\})\amalg B\big)$ obtained by replacing
the vertex $v$ of $T$ by the tree $S$ and connecting each edge $a$
in $E(T,v)$ at the root of $S$. The unit is the tree with a single
vertex.

%%%%%%%%%%%%%%%%%%%%%%%%%%%%%%%%%%%%%%%%%%%%%%%%%%%%%%%%%%%%%%%%%%%%%%%%%
\subsection{An operad of planar rooted trees analogous to NAP }
%%%%%%%%%%%%%%%%%%%%%%%%%%%%%%%%%%%%%%%%%%%%%%%%%%%%%%%%%%%%%%%%%%%%%

In this section, we describe an operad of  planar labelled
rooted trees. This operad is the planar analogue  of NAP.\\
 We define the partial compositions
$\diamond_v
:\mathcal{PRT}(A)\otimes\mathcal{PRT}(B)\rightarrow\mathcal{PRT}\big((A-\{v\})\amalg
B\big)$, as follows:
\begin{equation}
T\diamond_v S=\sum_{f_0:E(T,v)\to Ang^0(S)}{T\diamond_{v}^{f}S},
\end{equation}
where $Ang^0(S)$ is the set of angles starting from the root of $S$ and $f_0$ is
an increasing function from $E(T,v)$ to $Ang^0(S)$.
\begin{prop}
The partial compositions introduced above define a structure of an
operad. We will denote this operad by $\mathcal{B}^0$. The unit is the tree with a single vertex.
\end{prop}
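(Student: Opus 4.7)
My plan is to verify each operad axiom (unit, disjoint associativity, nested associativity, equivariance) after giving a concrete combinatorial reformulation of $\diamond_v$. Write $T_1,\dots,T_l$ for the subtrees hanging from $v$ in $T$ (read left to right), $S_1,\dots,S_k$ for the subtrees at the root of $S$, and interpret ``increasing'' as weakly increasing (this is forced by the unit axiom, since otherwise no function exists when $|E(T,v)|\geq 2$). A weakly increasing map $f_0:E(T,v)\to Ang^0(S)$ is then in bijection with a shuffle of the two ordered sequences $(T_1,\dots,T_l)$ and $(S_1,\dots,S_k)$: the preimage of the $i$-th angle records, in order, which $T_j$'s are inserted in that slot. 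Hence $T\diamond_v S$ is the sum, over all such shuffles, of the planar tree obtained by replacing $v$ by $S$ and attaching the $T_j$ as new children of the root of $S$ at the position prescribed by the shuffle.

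With this reformulation the unit axiom is immediate: the single-vertex tree $e$ has exactly one angle at its root, so the unique weakly increasing function reproduces $T$ in $T\diamond_v e$, and $E(e,*)$ is empty, so $e\diamond_* T=T$. Disjoint associativity $(T\diamond_v S)\diamond_w U=(T\diamond_w U)\diamond_v S$ for $v\neq w$ in $A$ is clear as well: the two substitutions happen at distinct vertices of $T$ and the shuffles at each location are carried out independently, giving a natural bijection of summands. For nested associativity $(T\diamond_v S)\diamond_w U=T\diamond_v(S\diamond_w U)$ with $w\in B$, I would split on whether $w$ equals the root of $S$. If not, the substitutions at $v$ (which only affects the children of the root of $S$) and at $w$ (deeper inside $S$) are spatially disjoint and commute term by term.

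The main case, and the real work, is when $w$ is the root of $S$. Writing $U_1,\dots,U_p$ for the subtrees at the root of $U$, the left-hand side enumerates planar trees whose distinguished root-level children form a shuffle of three ordered sequences $(T_j),(S_i),(U_h)$, obtained first by shuffling $(T_j)$ into $(S_i)$ at the intermediate stage and then shuffling the resulting sequence into $(U_h)$; the right-hand side enumerates the same outcome via the alternative order (first shuffle $(S_i)$ into $(U_h)$, then shuffle in $(T_j)$). Both sums range over \emph{all} shuffles of the three sequences preserving the internal order of each, so they agree by classical associativity of the shuffle product. Equivariance then follows automatically from the species formalism of sections 2 and 3 of \cite{S}, since the construction uses only ordered edge/angle data and vertex labels, which are transported functorially by bijections of the underlying finite sets. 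The main obstacle I expect is the bookkeeping in this last case: one must verify that the angle structure at the relevant vertex of the intermediate tree $T\diamond_v S$ (resp.\ $S\diamond_w U$) corresponds under the shuffle dictionary to shuffling in the third sequence, so that both iterated compositions truly enumerate shuffles of three ordered sequences.
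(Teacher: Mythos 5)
Your proof is correct, but it takes a genuinely different route from the paper. The paper gives no direct argument for this proposition: it simply omits the verification and defers to the later, more general theorem that the weighted partial compositions $\diamond_{v,\lambda}$ make $\mathcal{B}^{\lambda}$ a current-preserving operad, whose proof runs through the two bijection lemmas $(f,g)\mapsto(\wt f,\wt g)$ together with a potential-energy computation for the exponents of $\lambda$; the $\mathcal{B}^0$ case is then recovered at $\lambda=0$, where only the minimal-energy terms (those with image in $Ang^0$) survive. You instead verify the axioms directly, and the key added value is the explicit combinatorial dictionary: a weakly increasing map $E(T,v)\to Ang^0(S)$ is a shuffle of the ordered sequence of subtrees above $v$ into the ordered sequence of subtrees at the root of $S$, so nested associativity at the root reduces to associativity of the shuffle product (both iterated compositions enumerate three-way shuffles exactly once), and the remaining cases are locality statements. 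This is more elementary and self-contained than the paper's route (no weights, no energy bookkeeping, and no need to strip the current-preserving structure off $(\mathcal{B}^0)^{\mathbb{N}^*}$ afterwards), and it also pins down the reading of ``increasing'' as weakly increasing, which the paper leaves implicit but which, as you note, is forced by the unit axiom. What the paper's approach buys in exchange is economy: the single $\lambda$-deformed proof covers $\mathcal{B}^0$, Brace, and everything in between at once. Your three-way-shuffle bookkeeping in the root case is exactly the $Ang^0$-restriction of the paper's Lemma on the bijection $(f,g)\mapsto(\wt f,\wt g)$, so the two arguments are consistent where they overlap.
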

\begin{proof}
We easily verify the unity, associativity and equivariance axioms. We omit the proof, as we will give a proof of a more general result later on.
\end{proof}

\begin{defn} \label{symm}
We will denote the symmetrization operator of trees by $\varphi$
from  the space of non-planar labelled rooted trees to the space of planar labelled rooted trees,
by induction we define $\varphi$:\\
$\varphi(\racine)=\racine$ and if $T=B_+ (r,T_1\cdots T_k)$,
then
$$\varphi(T)=B_+\big(r,\varphi(T_1 )\sqcup \varphi(T_2)\sqcup\cdots\sqcup
\varphi(T_k)\big),$$ where $\sqcup$ is the shuffle product.
\end{defn}
i.e: $\varphi(T)$ is the sum of all planar representations of $T$.
\begin{thm}
$\varphi$ is a morphism of operads from Pre-Lie to Brace \cite{Chap1}.
Similarly $\varphi$ is a morphism of operads from NAP to $\mathcal{B}^0$.
\end{thm}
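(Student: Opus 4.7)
The strategy is to verify the operad morphism identity
$$\varphi(T\circ_v S)=\varphi(T)\diamond_v\varphi(S)$$
for all non-planar labelled rooted trees $T,S$ and every vertex $v\in T$. Preservation of the unit is immediate (the single-vertex tree is the unit in both operads and is fixed by $\varphi$), and equivariance with respect to relabellings follows from the inductive definition of $\varphi$. Hence only this composition identity requires a proper argument, and the Pre-Lie/Brace statement being already established by Chapoton, I focus on the NAP to $\mathcal{B}^0$ case.

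First I would unfold each side as a sum of planar trees. Writing $k:=|E(T,v)|$ and $m$ for the number of children of the root of $S$, the right-hand side is
$$\varphi(T)\diamond_v\varphi(S)=\sum_{t}\sum_{s}\sum_{f_0\colon E(t,v)\to Ang^0(s)}t\diamond_v^{f_0}s,$$
where $t$ (resp.\ $s$) ranges over the planar representations of $T$ (resp.\ $S$), and $f_0$ is an increasing function from the $k$-element ordered set $E(t,v)$ to the $(m+1)$-element set $Ang^0(s)$ of angles at the root of $s$. The left-hand side is the sum over all planar representations of the non-planar tree $T\circ_v S$, which by the NAP formula is obtained by substituting $S$ at $v$ and reconnecting every edge of $E(T,v)$ to the root of $S$.

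The core step is to exhibit a bijection between the triples $(t,s,f_0)$ indexing the right-hand side and the planar representations $u$ of $T\circ_v S$. Given a triple, $t\diamond_v^{f_0}s$ is manifestly a planar representation of $T\circ_v S$. Conversely, given $u$, the root of the $S$-copy inside $u$ has $k+m$ children, which split canonically into the $k$ coming from $T$ and the $m$ coming from $S$ by their labels; their relative planar orders determine a unique planar representation $t$ of $T$ and a unique planar representation $s$ of $S$, while the interleaving pattern of the $k$ $T$-children among the $m+1$ gaps between the $S$-children at the root of $s$ encodes a unique increasing function $f_0\colon E(t,v)\to Ang^0(s)$. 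Away from $v$, the planar structure of $u$ factors cleanly through those of $t$ and $s$, so these two assignments are mutually inverse.

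The main obstacle will be the bookkeeping around the root of $s$: one has to check that the $m+1$ angles of $Ang^0(s)$ are exactly the $m+1$ gaps into which the $T$-subtrees are interleaved, and that the \emph{increasing} condition on $f_0$ corresponds precisely to preserving the left-to-right order of $E(T,v)$ within $u$. Once this combinatorial identification is laid out, the bijection, and hence the identity, follows, proving the theorem.
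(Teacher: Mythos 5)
Your argument is correct, but it is not the route the paper takes for this theorem. The paper disposes of the NAP $\to\mathcal{B}^0$ assertion in one line: it invokes Chapoton's result that $\varphi$ is a morphism from Pre-Lie to Brace, and then extracts the NAP/$\mathcal{B}^0$ identity by keeping only the terms of minimal potential energy on both sides of $\varphi(T\circ_v S)=\varphi(T)\diamond_v\varphi(S)$ --- the point being that among all graftings $f:E(T,v)\to v(S)$ (resp.\ $f:E(t,v)\to Ang(s)$) the minimizers of $d$ are exactly the root-graftings (resp.\ the maps into $Ang^0(s)$), and that $d$ is constant on the planar representatives of a given non-planar tree, so the minimal-energy parts of the two sides must agree. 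You instead prove the NAP case directly, by matching the planar representatives $u$ of $T\circ_v^{f_0}S$ bijectively with triples $(t,s,f_0)$ consisting of planar representatives of $T$ and $S$ and an increasing map $E(t,v)\to Ang^0(s)$; this is sound (the labels separate the $T$-children from the $S$-children at the root of the inserted copy of $S$, the relative orders recover $t$ and $s$ uniquely, and the interleaving into the $m+1$ root angles is exactly an increasing map), and it is in substance the ``key point'' the paper uses later, in the proof that $\varphi:\mathcal{O}^\lambda\to\mathcal{B}^\lambda$ is a morphism, specialized to root-graftings. What your approach buys is self-containedness --- you do not need Chapoton's theorem to get the second assertion --- at the cost of the combinatorial bookkeeping you acknowledge but only sketch; what the paper's approach buys is brevity and the conceptual remark that NAP and $\mathcal{B}^0$ sit inside Pre-Lie and Brace as the ``lowest-energy layers,'' compatible with $\varphi$ because $\varphi$ preserves potential energy.
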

\begin{proof}
The first assertion is proved by F. Chapoton \cite[Prop 4]{Chap1}, the second assertion  follows immediately by considering only the terms of minimal potential energy (see Definition \ref{potentiel} below).
\end{proof}
\begin{defn} We define
\begin{equation}
T\star S=(\echelvw\diamond_v T)\diamond_w S.
\end{equation}
Equivalently, if $T=B_+ (r,T_1\ldots T_n)$ then
$$T\star S=\sum_{i=0}^{n}{B_+ (r,T_1\ldots T_i ST_{i+1}\ldots T_n)}.$$
\end{defn}
\begin{prop}
The space $(\mathcal{T}_{P}^{D},\star)$ is a right non-associative permutative
algebra \cite{F}. i.e: for any planar rooted trees $T,S,U$, we have:
$$(T\star S)\star U=(T\star U)\star S$$
\end{prop}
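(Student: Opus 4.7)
Write $T = B_+(r, T_1\cdots T_n)$. The operation $\star S$ inserts $S$ as an additional child of the root of $T$ at any of the $n+1$ available gaps around $T_1,\ldots,T_n$. The strategy is to show that $(T\star S)\star U$ and $(T\star U)\star S$ both equal the sum, over all placements of $S$ and $U$ as two additional root-children, of the resulting planar tree -- with the relative order of $T_1,\ldots,T_n$ preserved and $S,U$ occupying any two distinct positions in the enlarged list of children.

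First I would expand $(T\star S)\star U$ by applying the definition twice: the outer sum gives trees $B_+(r, T_1\cdots T_i\, S\, T_{i+1}\cdots T_n)$ for $i\in\{0,\ldots,n\}$, and $\star U$ then produces a further sum indexed by $j\in\{0,\ldots,n+1\}$. Each summand has the form $B_+(r, W_1\cdots W_{n+2})$, where $(W_1,\ldots,W_{n+2})$ is an interleaving of $(T_1,\ldots,T_n)$ (in order) with $S$ at some position $p$ and $U$ at some position $q\neq p$. I would then establish a bijection between the index pairs $(i,j)$ and the position pairs $(p,q)$ with $p\neq q$, splitting into two cases: if $j\leq i$ then $U$ lands to the left of $S$, so $q\leq p$, and one recovers $i=p-2$, $j=q-1$; if $j\geq i+1$ then $U$ lands to the right of $S$, so $q>p$, and one recovers $i=p-1$, $j=q-1$. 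In each case the inverse map is explicit, so every placement $(p,q)$ with $p\neq q$ arises from exactly one $(i,j)$.

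Hence $(T\star S)\star U$ equals the sum over all ordered pairs $(p,q)$ with $p\neq q$ of the tree whose root-children are $T_1,\ldots,T_n$ (in order) with $S$ inserted at position $p$ and $U$ at position $q$. Running the same argument with $S$ and $U$ swapped shows that $(T\star U)\star S$ equals this same sum (just re-indexed by $(q,p)$ instead of $(p,q)$, which ranges over the same set), yielding the identity. The main obstacle is the bookkeeping in the bijection, since the position of $U$ in the final tree shifts by one depending on whether it is inserted before or after $S$; once the two cases are correctly aligned, the symmetry between $S$ and $U$ in the resulting expression makes the right non-associative permutative identity transparent.
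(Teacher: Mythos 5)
Your argument is correct, and it is worth noting that the paper itself supplies no proof of this proposition at all, so your write-up actually fills a gap rather than duplicating anything. Your route is the direct combinatorial one based on the second (explicit) formula $T\star S=\sum_{i=0}^{n}B_+(r,T_1\ldots T_i\,S\,T_{i+1}\ldots T_n)$: the bijection $(i,j)\leftrightarrow(p,q)$ is set up correctly (the only cosmetic slip is that in the case $j\le i$ you write ``$q\le p$'' where the inequality is in fact strict, $q<p$, as your own inversion formula $i=p-2$ already presupposes), and the resulting expression --- the sum over all ordered pairs of distinct positions for $S$ and $U$ among the $n+2$ root-children, with $T_1,\ldots,T_n$ kept in order --- is visibly symmetric in $S$ and $U$, which gives the identity. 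The route the paper implicitly has in mind is different: since $\star$ is \emph{defined} operadically as $(L\diamond_v T)\diamond_w S$ for the two-vertex ladder $L$, the identity $(T\star S)\star U=(T\star U)\star S$ can be deduced from the nested and disjoint associativity axioms of $\mathcal{B}^0$ together with the observation that $L\diamond_{v'}L$ is the symmetric sum of the two planar corollas on $\{w,w'\}$. That operadic derivation is shorter once the axioms are available, but the paper also defers the proof that $\mathcal{B}^0$ is an operad; your elementary computation has the merit of being self-contained and of exhibiting the common value of both sides explicitly.
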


%%%%%%%%%%%%%%%%%%%
 \section{The notion of current-preserving operads}
 %%%%%%%%%%%%%%%%%%%%%%%%%%%%%%%%

%%%%%%%%%%%%%%%%%%%%%%%%%%%%%%%%%%%%%%%%%%%%%%
\subsection{Structure of current-preserving operads}
%%%%%%%%%%%%%%%%%%%%%%%%%%%%%%%%%%%%%%%%%%%%%%

Let $\mathcal{O}$ be an operad and $G$ be a commutative semigroup,
 with additively denoted binary law. We say that $\mathcal{O}$  has a structure
 of $G$-current-preserving operad, if moreover
$\mathcal{O}_A=\prod\mathcal{O}_{A,W},~~~W:A\to G$ where :
 \begin{itemize}
 \item The right action of the symmetric group $AutA$ verifies:
 $$\mathcal{O}_{A,W}.\sigma=\mathcal{O}_{A,W\circ\sigma},~~\forall  \sigma\in AutA.$$
 \item For any finite sets $A, B$ and $v\in A,$ we have:
 $$\circ_v: \mathcal{O}_{A,W}\otimes\mathcal{O}_{B,X}\to\mathcal{O}_{A-\{v\}\amalg
B,W\amalg X},$$
 with image zero if $\sum_{b\in B}{X(b)}\neq W(v)$. Here, $W\amalg X$
 is defined by: $W\amalg X(a)=W(a),\forall a\in A$, and $W\amalg X(b)=X(b),~~\forall
b\in B$.
 \end{itemize}
 \begin{exam}
 The model of current-preserving operads is the operad $Endop(V)$ where $V$ is a $G$-graded
vector space \cite{S}.
 Current-preserving operads are colored operads, with an extra structure given by
semigroup law on the set of colors.
 \end{exam}

 %%%%%%%%%%%%%%%%%%%%%%%%%%%%%%%%%%%%%%%%%%%%%%%%%%%%%%%%%%%%%%%%%%%%%%%%%%%%%%%
 \subsection{Current-preserving operads associated to ordinary operads} \label{ordinaryoperad}
 %%%%%%%%%%%%%%%%%%%%%%%%%%%%%%%%%%%%%%%%%%%%%%%%%%%%%%%%%%%%%%%%%%%%%%%%%%%%%%%

 Given an operad $\mathcal{O}$ and any commutative semigroup $G$, we define
 a $G$-current-preserving operad $\mathcal{O}^{G}$ as follows: for any finite 
 set $A$, we have:
 $$\mathcal{O}_{A}^{G}:=\prod_{W:A\to G}{\mathcal{O}_{A,W},}$$
 where $\mathcal{O}_{A,W}$ is nothing but a copy of $\mathcal{O}_A$. The partial compositions 
 of $(\alpha,W)\in \mathcal{O}_{A,W}$ and $(\beta,X)\in \mathcal{O}_{B,X}$ are defined for any $a\in A$ by:
 $$(\alpha,W)\circ_{a}^{G}(\beta,X)=(\alpha\circ_{a}\beta,W\amalg X),$$
 if $\sum_{b\in B}{X(b)}=W(a)$, and $(\alpha,W)\circ_{a}^{G}(\beta,X)=0$ 
 if $\sum_{b\in B}{X(b)}\neq W(a)$. There is a natural morphism of operads 
 (in the ordinary sense) $\phi^G :\mathcal{O}\rightarrow \mathcal{O}^{G}$ given for
 any finite set $A$ and for any $\alpha\in \mathcal{O}_A$ by:
 $$\phi^G (\alpha):=\sum_{W:A\to G}{(\alpha,W)}.$$
 The algebras on $\mathcal{O}^G$ are nothing but $G$-graded algebras on $\mathcal{O}$.
  The morphism of operads $\phi$ simply reflects the forgetful functor from $G$-graded $\mathcal{O}$-algebras 
  to $\mathcal{O}$-algebras. Hence, any ordinary operad gives rise to a $G$-current-preserving operad naturally associated with it.

\section{A family of current-preserving operads}

 %%%%%%%%%%%%%%%%%%%%%%%%%%%%%%%%%%%%%%%%%%%%%%%%%%%%%%%%%%%%%%%%%%%%%%%
\subsection{Interpolation between NAP and Pre-Lie}\label{NAPP}
%%%%%%%%%%%%%%%%%%%%%%%%%%%%%%%%%%%%%%%%%%%%%%%%%%%%%%%%%%%%%%%%%%%%%%%

We give here a family of $\mathbb N^*$-current-preserving operads
$(\mathcal{O}^{\lambda})_{\lambda\in K}$, where $\mathbb N^*$ is
the additive semi-group $\{1,2,3,\ldots\}$ of positive integers.
We have shown that this family interpolates between the $\mathbb
N^*$-current-preserving version of the NAP operad and the $\mathbb
N^*$-current-preserving version of the pre-Lie operad (see details in \cite{S}).

 \begin{defn}
 We introduce non-planar rooted trees with weights on their vertices:
 $$\racineun ,\racinedeux ,\racinetrois,\ldots$$
For a non-planar rooted tree $T$ and a weight function $W:v\mapsto
W(v)\in\mathbb N^*$, we define the weight of  $(T,W)$ by:
 \begin{equation}
 \left|T\right|=\sum_{v\in v(T)}W(v),
 \end{equation}
 where $v(T)$ denotes the set of vertices of $T$. Sometimes we will also use the
notation $|v|$ instead of $W(v)$.
\begin{exam}
 $\racineun ,\racinedeux ,\racinetrois
,\echelunun,\echelundeux,\echeldeuxun,\couronne,\echelununun$ are
 the non-planar rooted trees with weight less or equal to $3$.
 \end{exam}
 \end{defn}
 We draw non-planar rooted trees with labels and numbers on their vertices, each number refers
to the weight of the vertex.
 \begin{defn} \label{potentiel}
 We define the potential energy of a weighted non-planar rooted tree $(T,W)$ by:
 \begin{equation}
 d(T)=\sum_{v\in v(T)}W(v)h(v),
 \end{equation}
 where $h(v)$ is the height of $v$ in $T$, i.e. the distance from $v$ to the root of
$T$ counting the number of edges.
 \end{defn}
 This notion of potential energy matches the physical intution: if a branch
  of a tree is moved down, the potential energy decreases by a multiple of its
weight.\\

For any finite set $A$, let $\mathcal{O}_A$ be the completed
vector space spanned by the non-planar rooted trees with $\left|A\right|$
vertices of any weight, labellized by $A$. Namely:
\begin{equation}
\mathcal{O}_A:=\prod_{W:A\to\mathbb N^*}\mathcal{O}_{A,W},
\end{equation}
where $\mathcal{O}_{A,W}$ is the vector space spanned by the
non-planar rooted trees with $\left|A\right|$ vertices, labellized by $A$ and
with weight function $W$. For any weighted non-planar rooted tree $S\in
\mathcal{O}_{A}$ and any vertex $v$ of $S$, $E(S,v)$ denotes the
set of edges of $S$ arriving at the vertex  $v$ of $S$. Let $B$ be
another finite set and $T\in \mathcal{O}_{B}$ another weighted
rooted tree with $|B|$ vertices. Let $\lambda$ be an element of
the field $K$. We define the partial compositions by:
$$S\circ_{v,\lambda} T=\left\{\begin{array}{l}
\sum_{f:E(S,v)\rightarrow v(T)}{\lambda^{d(S\circ_{v}^{f}
T)-d(S\circ_{v}^{f_0}T)}S\circ_{v}^{f}T}~~~~\text{if}~~\left|~T\right|=\left|v\right|\\
 0~~~~~\text{otherwise},
\end{array}\right.$$
where  $S\circ_{v}^{f}T$ is the element of
$\mathcal{O}_{(A-\{v\})\amalg B}$ obtained by replacing the vertex
$v$ by $S$ and connecting each edge of $E(S,v)$ to its
 image by $f$ in $v(T)$. Here  $f_0$ is the map from $E(S,v)$ to
$v(T)$ which sends each edge $a$ of $E(S,v)$ to the root of $T$.
The tree $S\circ_{v}^{f_0}T$  has therefore the smallest potential
energy in the above sum. Unit is given by:
$$e=\sum_{n\geq1}{\racinen},$$
where $\racinen$ is the tree with one single vertex of weight $n$
(this infinite sum makes sense as ${\mathcal O}_1$ is a direct
product). The right action of the symmetric groups is given by
permutation of the labels.

\begin{exam}
Let us consider $S=\arbaa$ and $T=\arbab$. Here letters $a,b,c...$ are labels of
vertices, which are of weight $1,2$ or $3$.  We have:
 \begin{align*}
S\circ_{b,\lambda} T=\\
&&\arbac&+\lambda\arbad&+\lambda^{2}\arbae  &\hskip 6mm+\lambda^{3}\arbaf\\
          & &\hskip -8mm f_0
          :\left\{\begin{array}{ccc}c&\rightarrow&e\\d&\rightarrow&e\end{array}\right.\hbox
to
10mm{}&f:\left\{\begin{array}{ccc}c&\rightarrow&e\\d&\rightarrow&h\end{array}\right.&f:\left\{\begin{array}{ccc}c&\rightarrow&h\\d&\rightarrow&e\end{array}\right.&\hskip
8mm
f:\left\{\begin{array}{ccc}c&\rightarrow&h\\d&\rightarrow&h\end{array}\right.&&
\end{align*}
\end{exam}
\begin{thm}\label{principal}
The partial compositions defined above \cite{S} yield a structure of $\,\mathbb
N^*$-current-preserving operad on the species $A\mapsto \Cal O_A$, denoted by
$\mathcal{O}^{\lambda}$.
\end{thm}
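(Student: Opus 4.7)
The plan is to verify in turn the four axioms (unit, equivariance, disjoint and nested associativity) together with the current-preserving compatibility. The current-preserving condition is immediate from the definition: $S\circ_{v,\lambda}T$ vanishes unless $|T|=W(v)$, and in that case the total weight of the result is $|S|-W(v)+|T|=|S|$, with weight function $W$ on $A\setminus\{v\}$ and $X$ on $B$, as required. Equivariance reduces to the obvious fact that permutations of labels commute with substitution. For the unit, $e\circ_{1,\lambda}T$ picks out only the summand $\racinen$ with $n=|T|$ and substituting $T$ at that single vertex returns $T$; symmetrically $T\circ_{v,\lambda}e$ selects the summand of weight $W(v)$, for which both $f_0$ and the sum reduce to a single term producing $T$ back.

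The heart of the proof is nested associativity. I would first establish the key potential-energy identity for a single substitution: for $g:E(T,w)\to v(U)$,
\begin{equation*}
d(T\circ_w^g U)=d(T)+d(U)+\sum_{a\in E(T,w)}h_U(g(a))\,|T_a|,
\end{equation*}
where $T_a$ is the subtree of $T$ above $w$ attached via edge $a$. This is proved by splitting the vertex set of $T\circ_w^gU$ into three parts: vertices of $T$ not above $w$ (which keep their height), vertices of $T_a$ (which are lifted by $h_U(g(a))$), and vertices of $U$ (which are shifted by $h_T(w)$). The current-preserving hypothesis $|U|=W(w)$ is exactly what cancels the $h_T(w)W(w)$ contribution against the missing $W(w)h_T(w)$ from $d(T)$. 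As a corollary, the $\lambda$-exponent attached to a term indexed by $g$ in $T\circ_{w,\lambda}U$ is precisely $\sum_{a\in E(T,w)}h_U(g(a))|T_a|$.

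To prove $(S\circ_{v,\lambda}T)\circ_{w,\lambda}U=S\circ_{v,\lambda}(T\circ_{w,\lambda}U)$ for $w\in v(T)$, I would exhibit a bijection between pairs indexing the two sides: pairs $(f,g)$ with $f:E(S,v)\to v(T\circ_w^gU)$ on the right correspond to pairs $(f',h)$ with $f':E(S,v)\to v(T)$ and $h:E(S\circ_v^{f'}T,w)\to v(U)$ on the left, by setting $f'(b)=f(b)$ if $f(b)\in v(T)\setminus\{w\}$ and $f'(b)=w$ otherwise, while $h$ agrees with $g$ on $E(T,w)$ and sends each re-routed edge $b$ to $f(b)\in v(U)$. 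Both constructions yield the same underlying tree, so the only real task is to verify that the two $\lambda$-exponents match. Applying the potential-energy identity twice on each side and splitting the sum over $E(S,v)$ according to whether $f(b)$ lies outside all $T_a$, in some $T_a$, or in $v(U)$, and using that $|(S\circ_v^{f'}T)_a|=|T_a|+\sum_{b:\,f(b)\in T_a}|S_b|$ for $a\in E(T,w)$, the two expressions coincide term by term. This bookkeeping is the main obstacle and where care is needed.

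Finally, disjoint associativity $(S\circ_{v,\lambda}T)\circ_{w,\lambda}U=(S\circ_{w,\lambda}U)\circ_{v,\lambda}T$ for distinct $v,w\in v(S)$ is essentially formal: the substitutions affect disjoint pieces of $S$, so the contributions to the potential energy (and hence the $\lambda$-exponents) decouple into independent sums, and the bijection of indexing data is the identity. Combining all these steps gives the desired structure of $\mathbb N^{*}$-current-preserving operad on $\mathcal{O}^\lambda$; in particular, specializing $\lambda=0$ recovers the analogue of $\mathcal{B}^0$ mentioned in the previous section, confirming the promised general result.
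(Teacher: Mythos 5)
Your proposal is correct and follows essentially the same route as the paper (which spells out this argument in full only for the planar analogue $\mathcal{B}^{\lambda}$ and defers the non-planar case to \cite{S}): a bijection between the pairs of grafting maps indexing the two sides of each associativity relation, combined with the potential-energy computation $\epsilon(f)=\sum_{e\in E(S,v)}h\big(f(e)\big)\left|B_e\right|$ (your identity $d(T\circ_w^g U)=d(T)+d(U)+\sum_{a}h_U(g(a))|T_a|$, whose validity rests on the current-preserving hypothesis exactly as you note) to match the $\lambda$-exponents. The only difference is organizational: you check the exponent equality by a direct three-case term-by-term expansion, whereas the paper adds the energy of the common base tree $(S\circ_v^{f_0}T)\circ_w^{g_0}U$ to each exponent and identifies both normalized quantities with the energy of the common composite tree supplied by the bijection lemma.
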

\begin{rmk}
With the notations of Sec \ref{ordinaryoperad}, the current-preserving operad $\mathcal{O}^{\lambda}$ is naturally associated with $NAP$ for $\lambda=0$
and with Pre-Lie for $\lambda=1$ \cite{S}.
\end{rmk}

%%%%%%%%%%%%%%%%%%%%%%%%%%%%%%%%%%%%%%%%%%%%%%%%%%%%%%%%%%%%%%%%%%%%
\subsection{Interpolation between Brace and $\mathcal{B}^0$}
%%%%%%%%%%%%%%%%%%%%%%%%%%%%%%%%%%%%%%%%%%%%%%%%%%%%%%%%%%%%%%%%%%%%%%%%%%

We keep the notations of the sec. \ref{NAPP} but we replace non-planar rooted trees by planar
 rooted trees and $\mathcal{O}$ by  $\mathcal{B}$.\\
 Let $A$ be a finite set. Let $S$ be a weighted planar rooted tree with $|A|$ vertices.
  Let $B$ another finite set and $T$ another weighted planar rooted tree with $|B|$
vertices.
  Let $\lambda$ be an element of the field $K$. We define the partial compositions:
  $$S\diamond_{v,\lambda} T=\left\{\begin{array}{l}
\sum_{f:E(S,v)\rightarrow~ \text{Ang}(T)}{\lambda^{d(S\diamond_{v}^{f}
T)-d(S\diamond_{v}^{f_0}T)}S\diamond_{v}^{f}T}~~~~\text{if}~~\left|~T\right|=\left|v\right|\\
 0~~~~~\text{otherwise},
\end{array}\right.$$
where $S\diamond_{v}^{f}T$ is the weighted planar rooted tree of
$\mathcal{B}_{A-\{v\}\amalg B}$
obtained by replacing the vertex $v$ by $T$ and connecting each edge of $E(S,v)$ to
its image
by $f$ in $Ang(T)$. Here, $f_0$ is any increasing map from $E(S,v)$ to $Ang^0(T)$.
 The trees $S\diamond_{v}^{f_0}T$
 have, therefore,
 the smallest potential energy in the above sum.
 \begin{rmk}
 $f_0$ is not unique but the energy $d(S\diamond_{v}^{f_0}T)$ is the same for any $f_0
:E(S,v)\to Ang^0(T).$
 \end{rmk}

 \begin{exam}

 Let us consider $S=\arbaa$ and $T=\arbab$. Here letters $a,b,c...$ are labels of
vertices,
  which are of weight $1,2$ or $3$.  We have:
 \begin{align*}
S\diamond_{b,\lambda} T=\\
&&\arbacc&+ \arbac& +\arbaccc\\
&& +\lambda\arbaff&+ \lambda^{2}\arbae& +\lambda^{3}\arbaf
\end{align*}
 \end{exam}

\begin{thm}
The partial compositions defined above yield a structure
of $\mathbb{N}^*$- current-preserving operad on the species $A\longmapsto
\mathcal{B}_A$,
 denoted by $\mathcal{B}^{\lambda}$.
\end{thm}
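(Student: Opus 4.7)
The plan is to verify the axioms of an $\mathbb{N}^*$-current-preserving operad for $\mathcal{B}^\lambda$, closely paralleling the proof of Theorem \ref{principal} with angles replacing vertices and increasing maps replacing arbitrary ones. The current-preserving condition is built into the definition, since $S \diamond_{v,\lambda} T = 0$ unless $|T| = |v|$; in the nonzero case the resulting weight function is the concatenation $W \amalg X$. The unit $e = \sum_{n \geq 1} \racinen$ works because $E(\racinen, v) = \emptyset$ gives $\racinen \diamond_{v,\lambda} S = S$ when $|S| = n$, while $S \diamond_{v,\lambda} \racinen$ with $n = |v|$ trivially reproduces $S$. Equivariance is immediate from naturality in the labelling sets.

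Disjoint associativity at distinct vertices $v, w$ of $S$ is straightforward: the substitutions act on disjoint edge sets $E(S, v)$ and $E(S, w)$, so the underlying planar trees $(S \diamond_v^f T) \diamond_w^g U$ and $(S \diamond_w^g U) \diamond_v^f T$ coincide on the nose, and the two height-shift contributions to the potential energy $d$ are independent and additive, giving matching powers of $\lambda$.

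The main obstacle is nested associativity: $(S \diamond_{v,\lambda} T) \diamond_{w,\lambda} U = S \diamond_{v,\lambda}(T \diamond_{w,\lambda} U)$ when $w$ is a vertex of $T$. The strategy is to set up a bijection between pairs of increasing maps $(f, g)$ indexing the terms on the left-hand side, with $f : E(S,v) \to Ang(T)$ and $g : E(S \diamond_v^f T, w) \to Ang(U)$, and pairs $(g', h)$ indexing those on the right, with $g' : E(T, w) \to Ang(U)$ and $h : E(S, v) \to Ang(T \diamond_w^{g'} U)$. Under the bijection, $g'$ is the restriction of $g$ to $E(T, w)$, and $h$ captures the angle assignments of $f$ at vertices of $T$ other than $w$, while edges of $E(S,v)$ originally pointing to $w$ are redirected through $g$ to positions inside the grafted copy of $U$. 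The total order on angles is preserved under this correspondence, so increasing maps $f$ correspond bijectively to increasing maps $h$, and the underlying planar trees match term by term.

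The hard part will be the energy additivity identity
\begin{equation*}
d\bigl((S \diamond_v^f T) \diamond_w^g U\bigr) - d\bigl((S \diamond_v^{f_0} T) \diamond_w^{g_0} U\bigr) = \bigl[d(S \diamond_v^f T) - d(S \diamond_v^{f_0} T)\bigr] + \bigl[d(T \diamond_w^{g'} U) - d(T \diamond_w^{g'_0} U)\bigr],
\end{equation*}
which ensures the powers of $\lambda$ match. This reduces to the explicit formula $d(S \diamond_v^f T) - d(S \diamond_v^{f_0} T) = \sum_{u} h_T(f(a_u)) \cdot |S_u|$, where $u$ ranges over children of $v$ in $S$, $a_u$ is the edge from $v$ to $u$, and $|S_u|$ denotes the total weight of the subtree of $S$ rooted at $u$, together with its iterate for two successive substitutions. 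The cross-shift terms, arising because vertices of $U$ are shifted both by the outer $v$-substitution and by the inner $w$-substitution, cancel thanks to $|T| = |v|$ and $|U| = |w|$, completing the proof.
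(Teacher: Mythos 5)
Your overall architecture is the same as the paper's: a term-by-term bijection between the indexing pairs $(f,g)$ and $(\wt f,\wt g)$ for each associativity axiom, followed by a comparison of the exponents of $\lambda$. Your description of the bijection for nested associativity ($\wt g$ the restriction of $g$ to $E(T,w)$, $\wt f$ redirecting the edges that $f$ sent to angles at $w$) is exactly the paper's Lemma \ref{lemme emboite}, and your explicit remark that increasing maps correspond to increasing maps is a point the paper leaves implicit. The genuine gap is that your ``energy additivity identity'' is false. Take $S$ a root $v$ of weight $3$ with one child $x$ of weight $1$; $T$ a root $t_0$ of weight $1$ with one child $w$ of weight $2$; $U$ a root $u_0$ of weight $1$ with one child $u_1$ of weight $1$; let $f$ send the edge below $x$ to an angle at $w$, and $g$ send it to an angle at $u_1$. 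Then the left-hand side of your identity equals $6-4=2$, whereas $d(S\diamond_v^{f}T)-d(S\diamond_v^{f_0}T)=1$ and $E(T,w)=\emptyset$ makes the second bracket vanish, so the right-hand side equals $1$. The missing quantity is the cross term $\sum h_U\big(g(e)\big)\left|B_e\right|$ over those $e\in E(S,v)$ with $f(e)\in Ang_w(T)$: branches of $S$ that $f$ parks at $w$ and that $g$ then pushes further up into $U$. This term does not cancel ``thanks to $|T|=|v|$ and $|U|=|w|$''; it is a genuine contribution, equal to $1$ in the example, which appears in the exponent on both sides of the associativity relation.

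The paper avoids this pitfall by never splitting the exponent into the two one-step increments. It shows instead that, after adding the constant $d\big((S\diamond_{v}^{f_0}T)\diamond_{w}^{g_0}U\big)$, the exponent on either side telescopes to the potential energy of the full composite tree, i.e.\ $A(f,g)=d\big((S\diamond_{v}^{f}T)\diamond_{w}^{g}U\big)-d\big((S\diamond_{v}^{f_0}T)\diamond_{w}^{g_0}U\big)$ and likewise for $B(\wt f,\wt g)$, and then concludes by applying the bijection both to $(f,g)$ and to the minimal pair $(f_0,g_0)\mapsto(\wt f_0,\wt g_0)$. The increment formula you quote, $\epsilon(f)=\sum_{e}h\big(f(e)\big)\left|B_e\right|$, is used there only to establish $d\big((S\diamond_{v}^{f}T)\diamond_{w}^{g_0}U\big)-d\big((S\diamond_{v}^{f_0}T)\diamond_{w}^{g_0}U\big)=\epsilon(f)$, which holds precisely because $g_0$ grafts onto the root of $U$ and $|U|=|w|$. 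You need to replace your additivity identity with this telescoping argument (or prove the corrected identity carrying the cross term on both sides); the remainder of your outline --- unit, equivariance, the current-preserving grading, and disjoint associativity --- is sound and agrees with the paper.
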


\begin{proof}

We prove nested associativity first, and then disjoint associativity.
\begin{itemize}
\item \textbf{Nested associativity:}

Let $S,T,U$ be three weighted planar trees, let $v$ be a vertex of $S$
and $w$ be a vertex of
 $T$ such that $\left|T\right|=\left|v\right|$ and
$\left|U\right|=\left|w\right|$.\\

 Show $(S\diamond_{v,\lambda} T)\diamond_{w,\lambda} U=S\diamond_{v,\lambda}
(T\diamond_{w,\lambda} U)$
 where $v$ is a vertex of $S$ and $w$ a vertex of $T$.\\
 We have:
\begin{eqnarray*}
(S\diamond_{v,\lambda} T)\diamond_{w,\lambda} U&=&\sum_{f:E(S,v)\rightarrow
Ang(T)}{\lambda^{d(S\diamond_{v}^{f}T)-d(S\diamond_{v}^{f_0}T)}(S\diamond_{v}^{f}T)\diamond_{w,\lambda}
U}\\
&\hskip -20mm=&\hskip -10mm \sum_{f:E(S,v)\rightarrow
Ang(T)}{\sum_{g:E(S\diamond_{v}^{f}T, w)\rightarrow
Ang(U)}{\lambda^{d(S\diamond_{v}^{f}T)-d(S\diamond_{v}^{f_0}T)+d((S\diamond_{v}^{f}T)\diamond_{w}^{g}U)-d((S\diamond_{v}^{f}T)\diamond_{w}^{g_0
}U)}(S\diamond_{v}^{f}T)\diamond_{w}^{g}U}}\\
&=&\sum_{f:E(S,v)\rightarrow Ang(T)}{\sum_{g:E(S\diamond_{v}^{f}T,w)\rightarrow
Ang(U)}{\lambda^{A(f,g)}(S\diamond_{v}^{f}T)\diamond_{w}^{g}U}},
\end{eqnarray*}
where:
\begin{equation}
A(f,g)=d(S\diamond_{v}^{f}T)-d(S\diamond_{v}^{f_0}T)+d\big((S\diamond_{v}^{f}T)\diamond_{w}^{g}U\big)-d\big((S\diamond_{v}^{f}T)\diamond_{w}^{g_0}U\big).
\end{equation}
 Similarly we have:
\begin{eqnarray*}
S\diamond_{v,\lambda}(T\diamond_{w,\lambda}U)&=&\sum_{\wt g:E(T,w):\rightarrow
Ang(U)}{\lambda^{d(T\diamond_{w}^{\wt g}U)-d(T\diamond_{w}^{\wt
g_0}U)}S\diamond_{v,\lambda}(T\diamond_{w}^{\wt g}U)}\\
&\hskip -30mm=&\hskip -15mm \sum_{\wt f :E(S,v)\rightarrow Ang(T\diamond_{w}^{\wt
g}U)}\ {\sum_{\wt g :E(T,w)\rightarrow Ang(U)}{\lambda^{d(T\diamond_{w}^{\wt
g}U)-d(T\diamond_{w}^{\wt g_0}U)+d\big(S\diamond_{v}^{\wt f}(T\diamond_{w}^{\wt
g}U)\big)-d\big(S\diamond_{v}^{\wt f _{0}}(T\diamond_{w}^{\wt
g}U)\big)}S\diamond_{v}^{\wt f}(T\diamond_{w}^{\wt g}U)}}\\
&=&\sum_{\wt g : E(T,w)\rightarrow Ang(U)}\ {\sum_{\wt f :E(S,v)\rightarrow
Ang(T\diamond_{w}^{\wt g}U)}{\lambda^{B(\wt f,\wt g)}}S\diamond_{v}^{\wt
f}(T\diamond_{w}^{\wt g}U)},
\end{eqnarray*}
where we have set:
\begin{equation}
B(\wt f,\wt g)=d(T\diamond_{w}^{\wt g}U)-d(T\diamond_{w}^{\wt
g_0}U)+d\big(S\diamond_{v}^{\wt f}(T\diamond_{w}^{\wt
g}U)\big)-d\big(S\diamond_{v}^{\wt f_{0}}(T\diamond_{w}^{\wt g}U)\big).
\end{equation}
In order to show $(S\diamond_{v,\lambda} T)\diamond_{w,\lambda}
U=S\diamond_{v,\lambda} (T\diamond_{w,\lambda} U)$, we have to prove the following
lemma:
\begin{lem}\label{lemme emboite}
There is a natural bijection $(f,g)\longmapsto (\wt f,\wt g)$ such that
\begin{equation}\label{equation emboite}
(S\diamond_{v}^{f}T)\diamond_{w}^{g}U=S\diamond_{v}^{\wt f}(T\diamond_{w}^{\wt g}U).
\end{equation}

\end{lem}
\begin{proof}
Let $v$ be a vertex of $S$ and $w$ be a vertex of $T$ such that
$\left|T\right|=\left|v\right|$ and $\left|U\right|=\left|w\right|$.
We denoted by $Ang_w(T)$ the set of angles of $T$ issued from the vertex $w$.
Let $f:E(S,v)\rightarrow Ang(T)$ and $g:E(S\circ_{v}^{f}T,w)\rightarrow Ang(U)$ be a
two increasing functions.
We look for $\wt g:E(T,w)\rightarrow Ang(U)$ and $\wt f:E(S,v)\rightarrow
Ang(T\diamond_{w}^{\wt g}U)=Ang(U)\cup Ang(T)\backslash\{Ang_w(T)\}$
such that the equation \eqref{equation emboite} is checked.\\
Let $e$ be an edge of $T$ arriving at $w$, thus $e$ is an edge of $S\diamond_v T$
arriving at $w$. We set $\tilde{g}(e)=g(e)$. Similarly we define $\wt f$ in a unique
way:

$$\begin{array}{ccccl}
\wt f&:&E(S,v)&\longrightarrow &Ang(T\diamond_{w}^{\wt g}U)=Ang(U)\cup
Ang(T)\backslash\{Ang_w(T)\}\\
         & & e    &\longmapsto     &\wt f(e)=\left\{\begin{array}{ccc}f(e)& if
&f(e)\notin Ang_w(T)\\g(e)&if& f(e)\in Ang_w(T)\end{array}\right.
\end{array}$$
Conversely, we assume that we have the pair $(\wt f,\wt g)$ and look for the pair
$(f,g)$ such that
 equation \eqref{equation emboite} is verified.
 We have $\wt f:E(S,v)\rightarrow Ang(U)\cup Ang(T)\backslash\{Ang_w(T)\}$ and $\wt
g:E(T,w)\rightarrow Ang(U).$
 We then define:
$$\begin{array}{ccccl}
f&:&E(S,v)&\longrightarrow &Ang(T)\\
 & & e    & \longmapsto    &f(e)=\left\{\begin{array}{ccc}\wt f(e)&if&\wt f(e)\notin
Ang(U)\\ w&if& \wt f(e)\in Ang(U)\end{array}\right.
\end{array}$$
and
$$\begin{array}{ccccl}
g&:&E(S\diamond_{v}^{f}T,w)&\longrightarrow&Ang(U)\\
 & &e                  &\longmapsto
 &g(e)=\left\{\begin{array}{ccl}\wt g(e)&if&e\in T\\\wt f(e)&if&e\in S\hbox{ and
}f(e)\in Ang_w(T).\end{array}\right.
\end{array}$$
\end{proof}

\textit{Proof of Theorem \ref{principal} (continued) :} To show
$(S\diamond_{v,\lambda} T)\diamond_{w,\lambda} U=S\diamond_{v,\lambda}
(T\diamond_{w,\lambda} U)$,
 it remains to show the equality $A(f,g)=B(\wt f,\wt g)$. We set:
\begin{eqnarray*}
A'(f,g)&=&A(f,g)+d((S\diamond_{v}^{f_0}T)\diamond_{w}^{g_0}U),\\
B'(\wt f,\wt g)&=&B(\wt f,\wt g)+d(S\diamond_{v}^{\wt f_{0}}(T\diamond_{w}^{\wt
g_{0}}U)),\\
\epsilon(f)&=&d(S\diamond_{v}^{f}T)-d(S\diamond_{v}^{f_0}T),\\
\epsilon(\wt g)&=&d(T\diamond_{w}^{\wt g}U)-d(T\diamond_{w}^{\wt g_0}U).
\end{eqnarray*}
We have:
\begin{equation}
\epsilon(f)=\sum_{e\in E(S,v)}{h\big(f(e)\big).\left|B_e\right|},
\end{equation}
where $h\big(f(e)\big)$ is the distance between $f(e)$ and the root of $T$ in the
new tree $S\diamond_{v}^{f}T$
 and $\left|B_e\right|$ is the weight of the branch above $e$. Similarly:
\begin{eqnarray*}
d\big((S\diamond_{v}^{f}T)\diamond_{w}^{g_0}U\big)-d\big((S\diamond_{v}^{f_0}T)\diamond_{w}^{g_0}U\big)&=&\sum_{e\in
E(S,v)}{h\big(f(e)\big)\left|B_e\right|}\\
&=&\epsilon(f).
\end{eqnarray*}
Here $g_0$ is not involved because it was connected to the root of $U$, so
$A'(f,g)=d\big((S\diamond_{v}^{f}T)\diamond_{w}^{g}U\big)$.
 By the same computation with $\wt g$ instead of $f$ we
show $B'(\wt f, \wt g)=d\big(S\diamond_{v}^{\wt f}(T\diamond_{w}^{\wt g}U)\big)$. So by
Lemma \ref{lemme emboite} we have  $A'(f,g)=B'(\wt f, \wt g)$ that is to say
$A(f,g)+d\big((S\diamond_{v}^{f_0}T)\diamond_{w}^{g_0}U\big)=B(\wt f, \wt
g)+d\big(S\diamond_{v}^{\wt f_0}(T\diamond_{w}^{\wt g_0}U)\big)$,
which proves that $A(f,g)=B(\wt f, \wt g)$ because by Lemma \ref{lemme emboite} we have
$d\big((S\diamond_{v}^{f_0}T)\diamond_{w}^{g_0}U\big)=d\big(S\diamond_{v}^{\wt
f_0}(T\diamond_{w}^{\wt g_0}U)\big).$\\

\item  \textbf{Disjoint associativity:} let $v,w$ be two disjoint vertices of $S$
such that $\left|v\right|=\left|T\right|$ and $\left|w\right|=\left|U\right|$, show
that:
\begin{equation}\label{equation disjointe}
(S\diamond_{v,\lambda} T)\diamond_{w,\lambda} U=(S\diamond_{w,\lambda}
U)\diamond_{v,\lambda} T.
\end{equation}
We have
\begin{eqnarray*}
(S\diamond_{v,\lambda} T)\diamond_{w,\lambda} U&=&\sum_{f:E(S,v)\rightarrow
Ang(T)}{\lambda^{d(S\diamond_{v}^{f}T)-d(S\diamond_{v}^{f_0}T)}(S\diamond_{v}^{f}T)\diamond_{w,\lambda}
U}\\
&=&\sum_{f:E(S,v)\rightarrow Ang(T)}{\sum_{g:E(S\diamond_{v}^{f}T,w)\rightarrow
Ang(U)}{\lambda^{k(f)+d\big((S\diamond_{v}^{f}T)\diamond_{w}^{g}U\big)-d\big((S\diamond_{v}^{f}T)\diamond_{w}^{g_0}U\big)}}(S\diamond_{v}^{f}T)\diamond_{w}^{g}U}\\
&=& \sum_{f:E(S,v)\rightarrow Ang(T)}{\sum_{g:E(S\diamond_{v}^{f}T,w)\rightarrow
Ang(U)}{\lambda^{C(f,g)}}(S\diamond_{v}^{f}T)\diamond_{w}^{g}U},
\end{eqnarray*}
where:
\begin{eqnarray*}
k(f)&=&d(S\diamond_{v}^{f}T)-d(S\diamond_{v}^{f_0}T),\\
C(f,g)&=&k(f)+d\big((s\diamond_{v}^{f}T)\diamond_{w}^{g}U\big)-d\big((S\diamond_{v}^{f}T)\diamond_{w}^{g_0}U\big).
\end{eqnarray*}
Similarly we find:
$$(S\diamond_{w,\lambda} U)\diamond_{v,\lambda}T=\sum_{\wt g:E(S,w)\rightarrow
Ang(U)}{\sum_{\wt f:E(S\diamond_{w}^{\wt g}U,v)\rightarrow Ang(T)}{\lambda^{D(\wt f,
\wt g)}}(S\diamond_{w}^{\wt g}U)\diamond_{v}^{\wt f}T},$$
where:
\begin{eqnarray*}
D(\wt f,\wt g)&=&k(\wt g)+d\big((S\diamond_{w}^{\wt g}U)\diamond_{v}^{\wt
f}T\big)-d\big((S\diamond_{w}^{\wt g}U)\diamond_{v}^{\wt f_{0}}T\big), \hbox{
with}\\
k(\wt g)&=&d(S\diamond_{w}^{\wt g}U)-d(S\diamond_{w}^{\wt g_0}U).
\end{eqnarray*}
In order to prove \eqref{equation disjointe}, we need the following lemma:

\begin{lem}\label{lemme disjointe}
We have a natural bijection $(f,g)\longmapsto (\wt f,\wt g)$ such that
\begin{equation}\label{equa}
(S\diamond_{v}^{f}T)\diamond_{w}^{g}U=(S\diamond_{w}^{\wt g}U)\diamond_{v}^{\wt f}T.
\end{equation}
\end{lem}
\begin{proof}
Let $f:E(S,v)\rightarrow Ang(T)$ and $g:E(S\diamond_{v}^{f}T,w)\rightarrow
Ang(U)$. We look for $\wt g:E(S,w)\rightarrow Ang(U)$ and
$\wt f:E(S\diamond_{w}^{\wt g}U,v)\rightarrow Ang(T)$ such that the
Equation  \eqref{equa} is verified. Let  $\wt g$ be the restriction of $g$
on the edges $e$ from $S$ and $\wt f=f$. Here $E(S\diamond_{w}^{g}U,v)=E(S,v)$
because the vertices $v$ and $w$ are disjoint.
\end{proof}
\textit{Proof of Theorem \ref{principal} (end):} Thus to show  disjoint
associativity, it remains to show that for any pair $(f,g)$ and $(\wt f,\wt g)$ we
have $C(f,g)=D(\wt f, \wt g)$. We set
$C'(f,g)=C(f,g)+d\big( (S\diamond_{v}^{f_0}T)\diamond_{w}^{g_0}U\big)$ and $D'(\wt
f,\wt g)=D(\wt f,\wt g)+d\big((S\diamond_{w}^{\wt g_0}U)\diamond_{v}^{\wt
f_0}T\big)$.
 We have:
\begin{eqnarray*}
k(\wt g)&=&d(S\diamond_{w}^{\wt g}U)-d(S\diamond_{w}^{\wt g_0}U)\\
&=&\sum_{e\in E(S,w)}h\big(\wt g(e)\big).|B_e|,
\end{eqnarray*}
where $h\big(\wt g(e)\big)$ is the distance between the root of $U$ and $\wt g(e)$
in the new tree $S\diamond_w^{\wt g}U$.
 We also have:
\begin{equation*}
d\big((S\diamond_w^{\wt g}U)\diamond_v^{\wt f_0}T\big)-d\big((S\diamond_w^{\wt
g_0}U)\diamond_v^{\wt f_0}T\big)=k(\wt g)
\end{equation*}
because we changed the vertex $v$ by a tree of the same weight, and because $\wt
f_0$ is defined by grafting onto the root. This proves:
\begin{equation*}
D'(\wt f, \wt g)=d\big((s\diamond_w^{\wt g}U)\diamond_v^{\wt f}T\big).
\end{equation*}
By Lemma \ref{lemme disjointe} again, $C'(f,g)=D'(\wt f,\wt g)$ and  $C(f,g)=D(\wt
f,\wt g)$,
 which proves disjoint associativity.
\end{itemize}
 The partial compositions defined on  $\mathcal{O}^{\lambda}$ hence verify the
axioms of a current-preserving operad.
\end{proof}

\begin{rmk}
With the notations of Sec \ref{ordinaryoperad}, the current-preserving
operad $\mathcal{B}^\lambda$ is naturally associated with $({\mathcal{B}^0})^{\mathbb{N}^*}$
for $\lambda=0$ and with $Brace^{\mathbb{N}^*}$ for $\lambda=1$.
\end{rmk}

\begin{thm} The map $\varphi$ introduced in Definition \ref{symm} is a
morphism of current-preserving operad from $\mathcal{O}^\lambda$ to $\mathcal{B}^\lambda$ i.e:
$$\varphi(S\circ_{v,\lambda}T)=\varphi(S)\diamond_{v,\lambda}\varphi(T).$$
\end{thm}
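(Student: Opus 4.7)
The plan is to expand both sides in the basis of planar labelled rooted trees and match them term by term through an explicit bijection. On the left,
\begin{equation*}
\varphi(S\circ_{v,\lambda}T)=\sum_{f:E(S,v)\to v(T)}\lambda^{d(S\circ_v^f T)-d(S\circ_v^{f_0}T)}\varphi(S\circ_v^f T),
\end{equation*}
and each $\varphi(S\circ_v^f T)$ expands further as the sum of all planar representations $P$ of the non-planar labelled tree $S\circ_v^f T$. On the right,
\begin{equation*}
\varphi(S)\diamond_{v,\lambda}\varphi(T)=\sum_{\wt S,\wt T,\wt f}\lambda^{d(\wt S\diamond_v^{\wt f}\wt T)-d(\wt S\diamond_v^{\wt f_0}\wt T)}\wt S\diamond_v^{\wt f}\wt T,
\end{equation*}
where $\wt S,\wt T$ range over planar representations of $S$ and $T$, and $\wt f:E(S,v)\to Ang(\wt T)$ is a weakly increasing map with respect to the planar order on $E(S,v)$ inherited from $\wt S$.

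The heart of the argument is a natural bijection between the pairs $(f,P)$ indexing the left-hand side and the triples $(\wt S,\wt T,\wt f)$ indexing the right-hand side. Starting from a triple $(\wt S,\wt T,\wt f)$, let $\pi:Ang(\wt T)\to v(T)$ denote the forgetful map sending each angle to its underlying vertex, set $f:=\pi\circ\wt f$, and take $P:=\wt S\diamond_v^{\wt f}\wt T$, which is a planar representation of $S\circ_v^f T$. In the other direction, given $(f,P)$, one reads off $\wt T$ from $P$ by restricting the planar order at each vertex $u\in v(T)$ to $E(T,u)$ (forgetting the extra incoming edges of $f^{-1}(u)$); one reads off the planar representations of the remaining subtrees of $S$ directly from $P$; one defines $\wt f(e)$ to be the angle of $\wt T$ at which $e\in E(S,v)$ is plugged in; and one equips $E(S,v)$ with the total order obtained by traversing the angles of $\wt T$ in their planar order and listing the edges of $E(S,v)$ sitting in each angle according to their position in the shuffle with $E(T,u)$ at the corresponding vertex $u$. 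By construction $\wt f$ is then weakly increasing, and the two constructions are mutually inverse.

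It remains to compare the $\lambda$-exponents. Since the potential energy $d$ depends only on the weights and heights of vertices, it is invariant under the choice of a planar representation. Consequently $d(S\circ_v^f T)=d(\wt S\diamond_v^{\wt f}\wt T)$ and $d(S\circ_v^{f_0}T)=d(\wt S\diamond_v^{\wt f_0}\wt T)$ for every triple in the bijection, so the $\lambda$-exponents agree term by term. The current-preserving condition is automatic on both sides because $\varphi$ leaves the weight function untouched.

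The main obstacle is the combinatorial bookkeeping in the bijection: at each vertex $u$ of $T$ receiving edges from $f^{-1}(u)$, a planar representation of $S\circ_v^f T$ amounts to a shuffle of $E(T,u)$ with $f^{-1}(u)$, and this shuffle must be decoded as the pair consisting of an angle map $f^{-1}(u)\to Ang_u(\wt T)$ together with a total order on each of its fibres. Globally matching this data, across all $u\in v(T)$, with the data of a single weakly increasing map $\wt f:E(S,v)\to Ang(\wt T)$ together with the planar order on $E(S,v)$ provided by $\wt S$ is where the combinatorial content of the proof lies.
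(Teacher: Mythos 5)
Your proposal is correct and follows essentially the same route as the paper: the paper's proof rests on exactly your bijection, namely that a planar representative of $S\circ_v^f T$ is the same data as a pair of planar representatives $\wt S,\wt T$ together with an increasing map $\wt f:E(\wt S,v)\to Ang(\wt T)$ lifting $f$ through the projection $Ang(\wt T)\twoheadrightarrow v(T)$. You merely spell out more explicitly the shuffle bookkeeping and the invariance of the potential energy $d$ under choice of planar representative, both of which the paper leaves implicit.
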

\begin{proof}
By definition of partial compositions on planar trees and non-planar trees,
and as the shuffle product permute the branches in all possible ways, then we verify that $\varphi$ is a current-preserving operad morphism. The key point is the following: for any $f:E(S,v)\to v(T)$, choosing a planar representative of $S\circ_{v}^{f}T$ amounts to choosing planar representatives $\tilde{S}$ and $\tilde{T}$ of $S,T$ respectively, together with an increasing map $\tilde{f}: E(\tilde{S},v)\to Ang(\tilde{T})$ above $f$, i.e such that the following diagram commutes : 
 \diagramme{
\xymatrix{E(\tilde{S},v)\ar[r]^{\tilde{f}}\ar[dr]_f & Ang(\tilde{T})\ar@{->>}[d]\\
&v(T)
}
}

\end{proof}

{\bf{Acknowledgments:}} I would like to thank Dominique Manchon for valuable discussions and comments.


\begin{thebibliography}{abcdsfgh}

%\bibitem{AgraGam}
%A. A. Agrachev and R. V. Gamkrelidze,
%{\textsl{Chronological algebras and nonstationary vector fields}},
%J. Sov. Math (1981); 17:1, ~1650-1675.

\bibitem{Chap1}
    F. Chapoton, \textsl{Un th\'eor\`eme de Cartier-Milnor-Moore-Quillen pour les
big\`ebres dendriformes et les alg\`ebres
    braces},
    J. Pure. Appl. Algebra \textbf{168} (2002), 1--18.

%\bibitem{Chap2}
    %F. Chapoton, \textsl{Rooted trees and exponential-like series}, preprint arXiv
math/0209104 (2002).

\bibitem{Chap3}
    F. Chapoton, \textsl{Operads and algebraic combinatorics of trees}, S\'em. Loth.
Combinatoire \textbf{58} (2008).

\bibitem{ChaLiv}
        F.~Chapoton, M.~Livernet,
        {\textsl{Pre-Lie algebras and the rooted trees operad}},
        Int. Math. Res. Not. (2001), 395--408.


\bibitem{CK1}
    A.~Connes, D.~Kreimer,
    {\textsl{Hopf algebras, Renormalization and Noncommutative Geometry}},
    Comm. in Math. Phys. {\bf{199}}, (1998), 203--242.






%\bibitem{DL}
%A. Dzhumadil'daev, C. L\"ofwall, {\textsl{Trees, free right-symmetric algebras,
  %free Novikov algebras and identities}}, Homotopy, Homology and Applications,
%{\bf{4(2)}} (2002), 165-190.



%\bibitem{Do}
%V. Dotsenko,
%{\textsl{An operadic approach to deformation quantization of compatible
    %Poisson Brackets}}, J. Gen. Lie theory and Appl, 1 (2007), No. 2, 107-115.

\bibitem{F}
    L.~Foissy,
    {\textsl{Free brace algebras are free pre-Lie algebras}},
 Comm.Algebra. {\bf{38}}, 3358-3369 (2010).

\bibitem{GLE}
V. Ginzburg, M. Kapranov,
{\textsl{Koszul duality for operads,}} Duke Math. J.{\bf {76}}
(1994),~no.~1,~203-272.



%\bibitem{AJ}
    %A. Joyal,
    %{\textsl{Foncteurs analytiques et esp\`eces de structures,}}
    %Springer Lect. Notes in Math. {\bf{1234}} (1986), 126-159.

\bibitem{Liv}
    M. Livernet, \textsl{A rigidity theorem for pre-Lie algebras}, J. Pure
    Appl. Alg. \textbf{207  }, (2006),  1-18.
%\bibitem{LivLod}
%M. Livernet, J-L. Loday,
%{\textsl{The poisson operad as a limit of associative operads}}, unpublished
%preprint, March 1998.


\bibitem{L}
    J-L. Loday, {\textsl{La renaissance des op\'erades}},
     S\'eminaire N. Bourbaki, 1994-1995, exp. $n^{0}$ 792, p.47-74.





\bibitem{Renaiss}
    J-L. Loday, J. D. Stasheff, A. A. Voronov, {\textsl{Operads: Proceedings of
Renaissance Conferences}}, Contemporary mathematics 202, AMS (1997).

\bibitem{LV}
J-L. Loday, B. Vallette, {\textsl{Algebraic operads}},
Grundlehren Math. Wiss. 346, Springer, Heindelberg (2012).



%\bibitem{SML}

%S. Mac Lane, {\textsl{Categories for the working mathematician}}, Springer
%Graduate text in Maths {\bf{5}}, (1971).


%\bibitem{MS}

%D. Manchon, A. Sa\"idi, {\textsl{Lois pr\'e-Lie en interaction}}, Comm. in
%Algebra~~,Vol {\bf{39}} issue 10, (2011), 3662-3680.

%\bibitem{MaRem}
%M. Markl, E. Remm,
%{\textsl{Algebras with one operation including Poisson and other
   % Lie-admissible algebras} }, J. Algebra 299 (2006), No.1, 171-189.


\bibitem{S}
A. Sa\"idi, {\textsl{The pre-Lie operad as a deformation of NAP}},
 J. Algebra. Appl Vol {\bf{13}}, No {\bf{1}}, (2014).




%\bibitem{S}
%A. Sa\"idi, {\textsl{On a pre-Lie algebra defined by insertion of rooted
   % trees}}, Lett. Math. Phys. n.{\bf{92}}  (2010), 181-196.

\end{thebibliography}
\end{document}